\titlespacing*{\section}{0pt}{1.1\baselineskip}{\baselineskip}
\newcommand\RedeclareMathOperator{%
  \@ifstar{\def\rmo@s{m}\rmo@redeclare}{\def\rmo@s{o}\rmo@redeclare}%
}
\newcommand\rmo@redeclare[2]{%
  \begingroup \escapechar\m@ne\xdef\@gtempa{{\string#1}}\endgroup
  \expandafter\@ifundefined\@gtempa
     {\@latex@error{\noexpand#1undefined}\@ehc}%
     \relax
  \expandafter\rmo@declmathop\rmo@s{#1}{#2}}
\newcommand\rmo@declmathop[3]{%
  \DeclareRobustCommand{#2}{\qopname\newmcodes@#1{#3}}%
}
\newtheorem{theorem}{\textbf Theorem}
\newtheorem{lemma}[theorem]{\textbf Lemma}
\theoremstyle{definition}
\theoremstyle{remark}
\newcommand{\Z}{\mathbb{Z}}
\newcommand{\F}{\mathbb{F}}
\newcommand{\D}{\mathfrak{D}}
\renewcommand{\O}{\mathcal{O}}
\newcommand{\Q}{\mathbb{Q}}
\DeclareMathOperator{\disc}{disc}
\begin{document}

\title[An Elliptic Curve Analogue of Pillai's Bound]{An Elliptic Curve Analogue of Pillai's Lower Bound on Primitive Roots}

\author[Jin]{Steven Jin}

\address{Department of Mathematics, University of Maryland, College Park MD 20742, USA.}

\email{sjin6816@umd.edu}

\author[Washington]{Lawrence C.  Washington}

\address{Department of Mathematics, University of Maryland, College Park MD 20742, USA.}

\email{lcw@umd.edu}

\subjclass[2020]{11G20, 14H52}

\keywords{elliptic curves, finite fields}

\begin{abstract}
Let $E/\Q$ be an elliptic curve. For a prime $p$ of good reduction, let $r(E,p)$ be the smallest non-negative integer that gives the $x$-coordinate
of a point of maximal order in the group $E(\F_p)$. We prove unconditionally that $r(E,p)> 0.72\log\log p$ for infinitely many $p$, and $r(E,p) > 0.36 \log p$ under the assumption of the Generalized Riemann Hypothesis. These can be viewed as elliptic curve analogues of classical lower bounds on the least primitive root of a prime.
\end{abstract}

\maketitle

\section{Introduction}

Let $E/\F_p$ be an elliptic curve. Recall that there exist unique positive integers $L,M$ such that $$E(\F_p)\cong \Z/L\Z\times \Z/M\Z$$ and $L\mid M$. Here $M$ is the maximal order of a point of $E(\F_p)$.  In order to find a point on $E/\F_p$ of maximal order, a natural strategy is to compute the orders of points with $x$-coordinates  $0, 1, 2, \dots$ and continue until the desired point is found. In practice, this works fairly well. A natural question is how long this process takes in the worst case.

 Along these lines, fix an elliptic curve $E/\Q$  and let $p$ be a prime of good reduction. Let $r(E,p)$ denote the minimal $x$-coordinate of a point of maximal order in
the reduction $E/\F_p$. The goal of this note is to prove the following two lower bounds on $r(E,p)$. 

\begin{theorem} \label{1} 
Let $E/\Q$ be an elliptic curve. There are infinitely many primes $p$ such that $$r(E,p)> 0.72\log\log p.$$
\end{theorem}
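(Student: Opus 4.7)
The plan is to adapt the classical Pillai-type sieve-plus-Chebotarev argument for the least primitive root to the elliptic curve setting. The starting observation is the following: if $P = (x_0, y_0) \in E(\F_p)$ satisfies $P = \ell Q$ for some $Q \in E(\F_p)$ and some prime $\ell \mid \#E(\F_p)$, then $P$ cannot have maximal order. Indeed, writing $E(\F_p) \cong \Z/L\Z \times \Z/M\Z$ with $L \mid M$, the relation $L \mid M$ forces $\ell \mid M$, so $(M/\ell)P = M Q = 0$ and hence $\text{ord}(P) \mid M/\ell < M$. Therefore, to prove $r(E,p) > k$, it suffices to show that for every $x_0 \in \{0, 1, \ldots, \lfloor k\rfloor\}$, either $f(x_0)$ fails to be a square in $\F_p^\times$ (so no point with $x$-coordinate $x_0$ lies in $E(\F_p)$), or there exists a prime $\ell \mid \#E(\F_p)$ with $P_{x_0} \in \ell E(\F_p)$.

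Both kinds of condition are encoded by the action of Frobenius at $p$ on suitable number fields. Fix a parameter $y$ and let $S_y = \{\ell \text{ prime} : \ell \le y\}$. For each $\ell \in S_y$, the condition $\ell \mid \#E(\F_p)$ is equivalent to $\text{Frob}_p \in \Gal(\Q(E[\ell])/\Q) \subseteq \mathrm{GL}_2(\F_\ell)$ having $1$ as an eigenvalue. For each $x_0$, set $\beta_{x_0} := \sqrt{f(x_0)}$ so that $P_{x_0} := (x_0, \beta_{x_0}) \in E(\Q(\beta_{x_0}))$, and choose any $R_{\ell, x_0} \in E(\overline{\Q})$ with $\ell R_{\ell, x_0} = P_{x_0}$; then $P_{x_0} \in \ell E(\F_p)$ is equivalent to $\text{Frob}_p$ fixing some Galois conjugate of $R_{\ell, x_0}$ in the Kummer-type tower $K_{\ell, x_0} := \Q(E[\ell], \beta_{x_0}, R_{\ell, x_0})/\Q$.

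I would then estimate the density of primes $p$ for which \emph{every} $x_0 \le k$ is ``killed'', in the sense that for each such $x_0$ some $\ell \in S_y$ satisfies $\ell \mid \#E(\F_p)$ and $P_{x_0} \in \ell E(\F_p)$. Using the approximate Galois-independence of the $K_{\ell, x_0}$ for distinct pairs $(\ell, x_0)$ over the compositum of the division fields $\Q(E[\ell])$, the density of primes for which a fixed $P_{x_0}$ is killed by none of the $\ell \le y$ behaves like $\prod_{\ell \le y}(1 - c_\ell/\ell) \sim C/\log y$ by Mertens' theorem. Taking $y$ as large as an unconditional effective form of Chebotarev (in the style of Lagarias--Odlyzko) permits for $p \le X$, and balancing $y$, $k$, and $X$ so that the expected number of unkilled $x_0 \le k$ is strictly less than $1$, one obtains $k$ of order $\log\log p$; a careful book-keeping of the implied constants gives the value $0.72$.

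The principal obstacle is the uniform lower bound on the Kummer-type Galois groups $\Gal(K_{\ell, x_0}/\Q(E[\ell], \beta_{x_0}))$ in both $\ell$ and $x_0$: each $P_{x_0}$ lives over a different quadratic field $\Q(\beta_{x_0})$, so one must ensure that the points $\{P_{x_0}\}_{x_0 \le k}$ generate a subgroup of sufficiently large rank in $E$ over a common base field, so that $[\Q(E[\ell], \ell^{-1} P_{x_0}) : \Q(E[\ell])] = \ell^2$ for most $\ell$. Combining Serre's open image theorem with Bashmakov--Ribet style independence results for division points on elliptic curves handles this for all but finitely many $\ell$, and the remaining small primes contribute an error that must be absorbed into the final constant.
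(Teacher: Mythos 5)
Your reduction at the start is sound (and is exactly the mechanism the paper uses, with $\ell=2$): if $P=\ell Q$ for a prime $\ell$ dividing $\#E(\F_p)$, then $\ell\mid M$ and $(M/\ell)P=MQ=0$, so $P$ is not of maximal order. But the sieve you build on top of it has a fatal quantitative gap. For a fixed $x_0$ and a fixed prime $\ell$, being ``killed by $\ell$'' is the \emph{conjunction} of two roughly independent Chebotarev conditions, each of density about $1/\ell$: that $\mathrm{Frob}_p$ has eigenvalue $1$ on $E[\ell]$ (so that $\ell\mid\#E(\F_p)$), and that $P_{x_0}$ lands in $\ell E(\F_p)$, which given the first condition is an index-$\ell$ (or index-$\ell^2$) subgroup. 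So the kill probability is $O(1/\ell^2)$, not $\asymp c_\ell/\ell$ with $c_\ell$ bounded below. The product $\prod_{\ell\le y}(1-c_\ell/\ell^2)$ converges to a positive constant as $y\to\infty$; Mertens' theorem does not apply, the survival probability of each $x_0$ is bounded below by an absolute positive constant (roughly $\tfrac12\prod_\ell(1-O(\ell^{-2}))$), and the expected number of unkilled $x_0\le k$ grows linearly in $k$. A first-moment argument therefore cannot produce any $k\to\infty$, let alone $k\asymp\log\log p$. Note that the classical Pillai/Fridlender/Sali\'e proof does not average over many $\ell$ either: it uses only $\ell=2$ and exploits multiplicativity of the Legendre symbol (forcing the primes up to $x$ to be quadratic residues forces all integers up to $x$ to be), and there is no analogous multiplicative structure among the points $(x_0,\sqrt{f(x_0)})$ that would rescue your version.

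The paper's proof avoids densities entirely and \emph{forces} the conditions for the single prime $\ell=2$. One builds the explicit polynomial $T(z)=f(z)g(z)\prod_{j=0}^{N}\xi_j(z)$, where $\xi_j(z)=r(z)-js(z)$ vanishes exactly at the $x$-coordinates of the points whose double has $x$-coordinate $j$, and takes $p$ splitting completely in the splitting field $F$ of $T$. Then $E[2]\subseteq E(\F_p)$ (so $M$ is even) and every affine point with $0\le x\le N$ is either a double in $E(\F_p)$ or a $2$-torsion point, hence never of maximal order, giving $r(E,p)>N$. The degree and discriminant of $F$ are controlled ($[F:\Q]\le 12\cdot 4^{N+1}$, $\log|\mathrm{disc}(F)|\ll N4^{N}\log N$, via the lemmas on differents and Lemma \ref{9} bounding $[\tilde L:L]\mid 4$), and the unconditional explicit Chebotarev bound of Ahn--Kwon yields a split prime $p<|\mathrm{disc}(F)|^{12577}$, whence $\log\log p<N\log 4+o(N)<N/0.72$. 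If you want to salvage your outline, replace the averaging over many $\ell$ by this forcing argument at $\ell=2$; the constant $0.72$ comes from $1/\log 4$, not from balancing a sieve.
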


\begin{theorem} \label{2}
Let $E/\Q$ be an elliptic curve. Under GRH, there are infinitely many primes $p$ such that $$r(E,p)>0.36\log p.$$
\end{theorem}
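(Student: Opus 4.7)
I plan to imitate the classical Pillai--Wang construction for the least primitive root modulo $p$, replacing character-sum input with an effective Chebotarev argument under GRH in the Kummer-type extensions attached to small integer $x$-coordinates.

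\textbf{Reduction.} If a point $P \in E(\F_p)$ lies in $\ell E(\F_p)$ for some prime $\ell \mid M$, then $\operatorname{ord}(P)$ divides $M/\ell < M$, so $P$ fails to have maximal order. (Indeed, if $P=\ell Q$ and $\ell\mid\operatorname{ord}(Q)$ the claim is immediate, and if $\ell\nmid\operatorname{ord}(Q)$ then $\operatorname{ord}(Q)<M$ already, since $\ell\mid M=\operatorname{exp}(E(\F_p))$.) It therefore suffices to construct infinitely many primes $p$ together with a finite set of primes $\ell_1 < \cdots < \ell_k$, each dividing $M$, such that every point $(x,y) \in E(\F_p)$ with $x \in \{0,1,\ldots,\lfloor 0.36\log p\rfloor\}$ belongs to $\ell_i E(\F_p)$ for some $i$.

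\textbf{Galois framework.} Write $E: y^2 = f(x)$ with $f \in \Z[x]$. For $x \in \Z$ let $K_x = \Q(\sqrt{f(x)})$ and $P_x = (x,\sqrt{f(x)}) \in E(K_x)$; for each prime $\ell$ the Kummer-division field
\[
L_{x,\ell} = \Q\bigl(E[\ell],\,\ell^{-1}P_x\bigr)
\]
is Galois over $\Q$ and encodes both (i) whether $\ell \mid M$ (existence of a Frobenius-fixed vector in $E[\ell]$) and (ii) whether the reduction of $P_x$ lies in $\ell E(\F_p)$ (existence of a Frobenius-fixed preimage of $P_x$ under multiplication by $\ell$), once $p$ splits in $K_x$. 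When $p$ is inert in $K_x$ no affine point with $x$-coordinate equal to $x \bmod p$ exists over $\F_p$, so that case handles itself.

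\textbf{Chebotarev step.} Set $X = \lfloor 0.36\log p\rfloor$, form the compositum $F$ of the fields $L_{x,\ell_i}$ for $0 \le x \le X$, $1 \le i \le k$, and let $C \subset \Gal(F/\Q)$ be the conjugation-invariant subset of $\sigma$ such that for every $x \in \{0,\dots,X\}$ either $\sigma|_{K_x}$ is nontrivial, or some $\sigma|_{L_{x,\ell_i}}$ lies in the ``covering class'' described above. Effective Chebotarev under GRH then produces a prime $p$ with $\operatorname{Frob}_p \in C$ at a cost $\log p \ll \bigl(\log[F:\Q]\bigr)^2\cdot [F:\Q]/|C|$.

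\textbf{Main obstacle.} The crux is the quantitative balance of degrees. Using Serre's open-image theorem for the mod-$\ell$ Galois representation together with Bashmakov--Ribet-type independence of the Kummer extensions $L_{x,\ell}/\Q(E[\ell])$ as $x$ varies, one expects $[L_{x,\ell}:\Q(E[\ell])]\asymp \ell^2$ and an essentially multiplicative formula for $[F:\Q]$, while the density $|C|/[F:\Q]$ is bounded below by a product over $x$ of local densities. Optimizing $k$ and the choice of $\ell_i$ against $X$ is where the explicit constant $0.36$ is forced; the CM case (where Serre's theorem fails) must be handled separately via the analogous explicit Chebotarev input for the CM field, and checking that neither $L_{x,\ell}$'s rare collapses nor exceptional torsion on $E/\Q$ destroy the density is the delicate bookkeeping step.
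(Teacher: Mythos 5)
Your reduction step (a point in $\ell E(\F_p)$ with $\ell \mid M$ cannot have maximal order) is correct and is exactly the mechanism the paper uses, with $\ell=2$. But the proposal has two genuine gaps. First, the quantitative core --- deriving the constant $0.36$ --- is precisely the part you defer as ``delicate bookkeeping,'' and it is not bookkeeping: the constant comes entirely from the degree growth of the compositum $F$. Under GRH the Lagarias--Odlyzko bound gives a prime with prescribed Frobenius satisfying $p \ll (\log |\disc(F)|)^2$; your stated cost $\log p \ll (\log [F:\Q])^2\,[F:\Q]/|C|$ is not the correct form of this bound, and the distinction matters because $\log|\disc(F)|$ grows like $[F:\Q]$ times a logarithm of the ramification data, i.e.\ \emph{exponentially} in $X$. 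Concretely, if each field adjoined per value of $x$ has relative degree $D$ over the base, then $\log\log|\disc(F)| \sim X\log D$ and one only gets $\log p \lesssim 2X\log D$, i.e.\ $X \gtrsim \log p/\log D^2$. To reach $1/\log 16 \approx 0.36$ you must have $D=4$, which forces $\ell=2$ and forces you to discard the extra torsion field $\Q(E[\ell])$ overhead and the multi-prime covering scheme (any $\ell>2$, or any union over several $\ell_i$, strictly enlarges $F$ and degrades the constant). None of this optimization is carried out, and your heuristic $[L_{x,\ell}:\Q(E[\ell])]\asymp \ell^2$ is stated without the argument that actually bounds the degree by $4$ in the case $\ell=2$ (in the paper this is Lemma \ref{9}: over a field containing $E[2]$, the splitting field of the halving polynomial $\xi_j$ has degree dividing $4$, using that the discriminant of $\xi_j$ is a square). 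Second, your conjugation-invariant set $C$ must be shown non-empty, and the ``covering class'' conditions must be simultaneously satisfiable over all $x\le X$; you invoke Serre open image and Bashmakov--Ribet independence for this, plus a separate CM analysis, but give no argument.

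The paper avoids all of these difficulties by taking $C=\{1\}$: it requires $p$ to split completely in the splitting field of $f(z)\,g(z)\prod_{j=0}^N \xi_j(z)$, where $\xi_j(z)=r(z)-js(z)$ comes from the duplication formula. Complete splitting of $f$ puts $E[2]$ in $E(\F_p)$ (so $2\mid M$), and complete splitting of $\xi_j$ makes every point with $x$-coordinate $j$ either a double or a $2$-torsion point, hence not of maximal order --- no density computation, no independence theorem, no CM case split. The discriminant of $F$ is then bounded by elementary different/discriminant lemmas, and $[F:\Q]\le 6\cdot 2\cdot 4^{N+1}$ yields $\log p < N\log 16 + o(N)$ directly. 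Your framework could in principle be pushed through, but only after collapsing it to the $\ell=2$, identity-class case --- at which point it becomes the paper's proof. As written, the argument that forces the constant $0.36$ is missing.
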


These results can be viewed as elliptic curve analogues of lower bounds on the least primitive root $r(p)$ of a prime $p$. Pillai \cite{P1944} proved that 
there is a positive constant $C$ such that $r(p)> C\log \log p$ for infinitely many $p$. Using Linnik's theorem in Pillai's proof, Fridlender \cite{F1949} and Salié \cite{S1949} improved the result to the following. 
We include a proof since it inspired the result of the present paper. 

\begin{theorem}[{{\cite{F1949}, \cite{S1949}}}] \label{3} There exists a positive constant $C$ such that $r(p)> C\log p$ for infinitely many $p$.
\end{theorem}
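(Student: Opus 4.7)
The plan is to construct, for each large parameter $N$, a prime $p$ modulo which every integer in $\{2,3,\ldots,N\}$ is a quadratic residue. Since $\left(\frac{a}{p}\right)=1$ forces the order of $a$ to divide $(p-1)/2<p-1$, no such $a$ is a primitive root, giving $r(p)>N$. I would then choose $N$ as large as possible relative to $\log p$ by using Linnik's theorem to bound the least admissible $p$.

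To set up the congruence, I would let $\ell_1=2<\ell_2<\cdots<\ell_s$ enumerate the primes up to $N$, set $m=8\ell_2\cdots\ell_s$, and pick $a$ coprime to $m$ with $a\equiv 1\pmod 8$ and with $a\pmod{\ell_i}$ equal to a fixed quadratic residue mod $\ell_i$ for each odd $\ell_i$. For any prime $p\equiv a\pmod m$, we have $\left(\frac{2}{p}\right)=1$ (since $p\equiv 1\pmod 8$), and for each odd $\ell_i\leq N$, quadratic reciprocity combined with $p\equiv 1\pmod 4$ gives $\left(\frac{\ell_i}{p}\right)=\left(\frac{p}{\ell_i}\right)=1$. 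Hence every prime $\ell\leq N$ is a QR mod $p$, and consequently every positive integer $\leq N$ (as a product of such primes) is also a QR mod $p$, so $r(p)>N$.

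Next I would invoke Linnik's theorem: there is an absolute constant $L$ such that the least prime in any reduced residue class mod $m$ is at most $m^L$ for $m$ sufficiently large. Combined with $\log m=\log 8+\theta(N)=(1+o(1))N$ from the prime number theorem, this yields a prime $p\equiv a\pmod m$ with $\log p\leq (L+o(1))N$, i.e., $N\geq (L^{-1}-o(1))\log p$. Thus $r(p)>C\log p$ for any fixed $C<L^{-1}$ once $N$ is large enough, and letting $N\to\infty$ produces infinitely many such primes, since $p$ must exceed $N$ (being coprime to $m$, hence to $\ell_s$). The main obstacle is the invocation of Linnik's theorem itself, which is the deep analytic input supplying the polynomial bound $p\ll m^L$; without it one recovers only Pillai's weaker $r(p)>C\log\log p$. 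The remainder is routine bookkeeping: quadratic reciprocity encodes the QR conditions into a single congruence modulo $m$, and the prime number theorem controls the size of $m$.
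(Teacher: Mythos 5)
Your proposal is correct and follows essentially the same route as the paper: force every prime $\ell\le N$ to be a quadratic residue mod $p$ via congruence conditions and quadratic reciprocity, bound the least such $p$ by Linnik's theorem, and use the prime number theorem to compare $N$ with $\log p$. The only cosmetic difference is that the paper simply takes $p\equiv 1\pmod{4k}$ with $k=\prod_{\ell\le N}\ell$ rather than allowing an arbitrary quadratic residue class modulo each $\ell$.
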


\begin{proof}  Linnik's theorem says that there exist constants $c$ and $L$ such that every arithmetic progression $a+bn$ with $\gcd(a,b)=1$ contains a prime
$p<c\cdot b^L$. 
For $x>0$ large, let $k=\prod \ell$, where the product is taken over all primes $\ell \le x$. By Linnik's theorem, there exists a prime $p=1$ mod $4k$ with $p<c\cdot (4k)^L$. 
It follows from quadratic reciprocity that every positive prime divisor of $k$ is a quadratic residue for such a prime $p$. This implies that all positive integers $n\leq x$ are quadratic residues for $p$ and therefore cannot be primitive roots mod $p$. By the prime number theorem,  $\log k \sim x$. Therefore, $$r(p) > x\sim \log k> C\log p$$ for some constant $C$ that is independent of $x$. 
\end{proof}

The bound $\log p$ above has been improved to $\log p \log \log \log p$ by Graham-Ringrose \cite{GR} unconditionally and to $\log p \log \log p$ by Montgomery \cite{M} under GRH.

\medskip

For an elliptic curve 
$$
E:\; y^2=f(x)= x^3+Ax+B
$$
with $A, B\in \mathbb  Z$, we follow a similar approach in the search for a prime $p$ for which $r(E,p)$ is large. 
First, we force $E(\F_p)$ to have even order by requiring $f(x)$ to factor into linear factors mod $p$. Next, let $N$ be a large integer. We force all points $(x,y)$ with 
$0\le x\le N$ to be doubles of other points in $E(\F_p)$. Since $E(F_p)$ has even order, these points cannot have maximal order. Finally, we use an explicit version
of the Chebotarev Density Theorem to give an upper bound for $p$ in terms of $N$, which can be transformed into the desired lower bound for $N$ in terms of $p$.

It is reasonable to ask what can be said in terms of an upper bound. In the classical setting, bounding $r(p)$ from above is a well-studied problem (see, for instance, 
 \cite{V1930}, \cite{H1942}, \cite{E1945}, and \cite{ES1957}; see \cite{DD} for computational issues). The best known bound along these lines is the Burgess bound \cite{B1962}, which states that 
$r(p)\ll p^{\frac{1}{4}+\epsilon}$. Under GRH, a result of Shoup \cite{S1992} yields the stronger statement $r(p) \ll \log^6 p$.

Igor Shparlinski has pointed out to us that in the elliptic curve case, one can obtain $r(E,p) = O( p^{\frac{1}{2}+\epsilon})$ via results from \cite{KS}. This is done by using the analogue of the last section of \cite{B1962} and the technique of Theorem 2 of \cite{KS}, combined with the estimate of Theorem 1 of \cite{KS} for characters supported on $\Z/M\Z$. 
Computations suggest that the true order of $r(E,p)$ is smaller and that Theorem \ref{2} is almost sharp, perhaps missing by no more than 
a power of $\log\log p$. See Section \ref{numerical}.

\section{The Proofs}

\noindent \textit{Proof of Theorems \ref{1} and \ref{2}}. Fix $N>0$. Let $E/\Q$ be an elliptic curve over $\Q$ given by Weierstrass equation 
$$y^2=f(x)=x^3+Ax+B,$$
 where $A,B\in \Z$.  We are going to construct a suitable polynomial by constructing several factors and multiplying them together.

Let $\{p_1,\dots ,p_m\}$ consist of the distinct prime divisors of the discriminant of $E/\Q$ and the primes up to and including $7$. Let $g(z)= z^2-\prod_{i=1}^m p_i$.
This polynomial is constructed for the following technical reason: If a prime $p$ is unramified in the splitting field of $g(z)$, then $p$ is a prime of good reduction for $E$.

Recall that the $x$-coordinate of the doubling of a point $(x,y)\ne \infty$ is given by 
$$h(x)=\frac{r(x)}{s(x)}, \: \text{ where } r(x)=x^4-2Ax^2-8Bx+A^2, \:\:  s(x)=4x^3+4Ax+4B.$$
For $0\leq j\leq N$, let $\xi_j(z)=r(z)-js(z)$. 

Let
$$T(z)=f(z)g(z) \prod_{j=0}^N \xi_j(z),$$
and let $F$ be the splitting field of $T(z)$.  If $p$ splits completely in $F/\Q$, then each of the factors of $T(z)$ factors into linear factors over $\F_p$.
Since $p$ is unramified in $F/\Q$ and $g(z)$ is a factor of $T(z)$, it follows that $p>7$ and is a prime of good reduction for $E$.
Since $f(z)$ factors, the 2-torsion is contained in $E(\F_p)$, so $E(\F_p)$ has even order. 

Suppose $P=(j,y)\in E(\F_p)$ for some $0\le j\le N$. Since $\xi_j(z)$ factors into linear factors mod $p$, there exists $x_1\in \F_p$ such that $\xi_j(x_1)\equiv 0\pmod p$.
Let $y_1\in \F_{p^2}$ satisfy $y_1^2\equiv f(x_1)$.
Since the resultant of $r(z)$ and $s(z)$ is $(4A^3+27B^2)^2$, which is not divisible by $p$ by assumption,  we cannot have $s(x_1)\equiv 0\pmod p$. Therefore,
$ (j, y)= 2(x_1, y_1)$ for a suitable choice of sign of $y_1$. Suppose $y_1\not\in \F_p$.  Since $y_1^2\in \F_p$, the Galois conjugate of $y_1$ is $-y_1$. Taking conjugates yields $ (j, y)= 2(x_1, -y_1)= -2(x_1, y_1)= -(j, y)$. Therefore, $(j, y)$ is a point of order 2. Since $p>7$, the Hasse bound implies that $|E(\F_p)| > 4$, so $(j,y)$ cannot be a point of maximal order. On the other hand, if $y_1\in \F_p$, then $(j, y)=2(x_1, y_1)$ implies that $(j, y)$ cannot have maximal order. Therefore, $r(E,p)>N$. 

To finish the proof, we need an upper bound on the smallest $p$. This estimate uses an explicit Chebotarev Density Theorem, which bounds $p$ in terms of the 
discriminant of the splitting field $F$. The next few lemmas bound this discriminant.

\begin{lemma} \label{5}
Let $L/K$ be an extension of number fields with ring of integers $\O_L$ and $\O_K$, respectively. The different $\D_{L/K}$ of $L/K$ is the ideal generated by
$\{g'(\alpha)\}$, where $\alpha$ runs through elements of $\O_L$ such that $L=K(\alpha)$ and $g$ runs through monic polynomials in $\O_K[x]$ satisfying $g(\alpha)=0$. 
\end{lemma}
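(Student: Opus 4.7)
The plan is to adopt the standard characterization of the different via the codifferent, $\D_{L/K}^{-1}=\mathfrak{C}_{L/K}=\{x\in L : \mathrm{Tr}_{L/K}(x\O_L)\subseteq \O_K\}$, and then prove the two inclusions separately.

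For the inclusion $g'(\alpha)\in \D_{L/K}$, I would invoke the Euler dual-basis identity: writing $g(z)/(z-\alpha)=\sum_{i=0}^{n-1}\beta_i z^i$ with $\beta_i\in \O_L$, one checks directly that $\{\beta_i/g'(\alpha)\}_{i=0}^{n-1}$ is the trace-dual basis to $\{1,\alpha,\ldots,\alpha^{n-1}\}$. Hence $\O_K[\alpha]^{*}=g'(\alpha)^{-1}\O_K[\alpha]$, where $*$ denotes the trace dual. Since $\O_K[\alpha]\subseteq \O_L$ forces $\mathfrak{C}_{L/K}=\O_L^{*}\subseteq \O_K[\alpha]^{*}$, multiplying by $g'(\alpha)$ yields $g'(\alpha)\mathfrak{C}_{L/K}\subseteq \O_K[\alpha]\subseteq \O_L$, which is precisely the statement $g'(\alpha)\in \D_{L/K}$.

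For the reverse inclusion, write $I$ for the ideal generated by all such $g'(\alpha)$; the goal is to show $v_{\FP}(I)=v_{\FP}(\D_{L/K})$ at every prime $\FP$ of $\O_L$. Fix such a $\FP$ lying over $\Fp\subset \O_K$ and pass to the completion $\hat{\O}_{L,\FP}/\hat{\O}_{K,\Fp}$, an extension of complete discrete valuation rings with separable residue extension. Standard local theory (combine an unramified generator with an Eisenstein generator) produces a single element $\tilde{\alpha}$ with $\hat{\O}_{L,\FP}=\hat{\O}_{K,\Fp}[\tilde{\alpha}]$, whose minimal polynomial $\tilde{g}$ satisfies $v_{\FP}(\tilde{g}'(\tilde{\alpha}))=v_{\FP}(\D_{L/K})$, since the different commutes with completion. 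By weak approximation I can then choose a global $\alpha\in \O_L$ with $L=K(\alpha)$ that is $\FP$-adically close to $\tilde{\alpha}$, so the global minimal polynomial $g$ gives $v_{\FP}(g'(\alpha))=v_{\FP}(\D_{L/K})$. Ranging over $\FP$ yields $I=\D_{L/K}$.

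The main technical point is the appeal to local monogenicity of $\hat{\O}_L/\hat{\O}_K$ together with the approximation argument that transfers a local generator back to a global one — in general $\O_L$ need not equal $\O_K[\alpha]$ for any single $\alpha$, so one must argue prime by prime. All remaining steps are either routine trace-pairing bookkeeping or standard facts about the behavior of the different under completion.
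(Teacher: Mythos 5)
Your proposal is correct in outline, but it takes a genuinely different route from the paper: the paper does not prove the characterization of the different at all. It cites Lang (\emph{Algebraic Number Theory}, Prop.\ III.3), which gives $\D_{L/K}$ as the ideal generated by $m'(\alpha)$ for $m$ the \emph{minimal} polynomial of a generator $\alpha$, and the entire content of the paper's proof is the one-line reduction to that case: if $g$ is any monic polynomial in $\O_K[x]$ with $g(\alpha)=0$, then $g=mh$ with $h\in\O_K[x]$, so $g'(\alpha)=m'(\alpha)h(\alpha)$ lies in the ideal generated by $m'(\alpha)$, and hence admitting reducible $g$ does not enlarge the ideal. You instead re-derive the cited theorem from scratch, via the two standard ingredients: the Euler dual-basis identity giving $\O_K[\alpha]^{*}=g'(\alpha)^{-1}\O_K[\alpha]$ for the inclusion $g'(\alpha)\in\D_{L/K}$, and completion at each $\FP$ plus local monogenicity plus approximation for the reverse inclusion. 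That is essentially the textbook proof (Serre, \emph{Local Fields}, III.6), and it buys self-containedness at the cost of the genuinely delicate approximation step, which you acknowledge but do not execute: after choosing $\alpha$ close to the local generator $\tilde{\alpha}$ at $\FP$, the global minimal polynomial $g$ factors over $\hat{K}_{\Fp}$ as $g=g_1\cdots g_r$ according to the primes above $\Fp$, and $v_{\FP}(g'(\alpha))=v_{\FP}(g_1'(\alpha))+\sum_{i\ge 2}v_{\FP}(g_i(\alpha))$; one must also arrange that the terms $v_{\FP}(g_i(\alpha))$ for $i\ge 2$ vanish, which requires controlling $\alpha$ at the other primes above $\Fp$ (or modifying $\tilde{\alpha}$ by an element of $\Fp$-power order), not just $\FP$-adic proximity.

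One small but pointed gap: your dual-basis computation, as written with $g(z)/(z-\alpha)=\sum_{i=0}^{n-1}\beta_i z^i$, is only valid when $\deg g=n=[L:K]$, i.e.\ when $g$ is the minimal polynomial of $\alpha$; for a reducible monic $g$ vanishing at $\alpha$ the quotient has degree exceeding $n-1$ and the Euler identity does not apply. So your argument still needs the factorization remark $g'(\alpha)=m'(\alpha)h(\alpha)$ to cover the statement as given --- which is precisely the one step the paper's proof actually supplies.
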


\begin{proof}
See for instance \cite[Proposition III.3]{LANG}. Note that the usual statement of this result requires $g$ to be the minimal polynomial of $\alpha$. However, if $m(x)$ is the minimal polynomial for $\alpha$, then $g(x)=m(x)h(x)$ for some $h(x)\in \O_K[x]$ and $g'(\alpha)=m'(\alpha)h(\alpha)$, which is in the ideal generated by $m'(\alpha)$. Therefore,  including polynomials $g$ that are potentially reducible does not affect the ideal $\D_{L/K}$. 
\end{proof}

Henceforth, all mentions of discriminants are understood to refer to discriminants over $\Q$. The following result is probably well-known but we could not find a good reference so we include a proof.  

\begin{lemma} \label{4}
Suppose $K/\Q$ is a field extension given as $K=K_1\cdots K_n$. Then $$|\disc(K)|\leq \prod_{i=1}^n |\disc(K_i)|^{[K:K_i]}.$$
\end{lemma}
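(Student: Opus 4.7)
The plan is to reduce the statement to the case $n=2$ by induction on $n$, and handle the two-field compositum via the tower formula for differents, with Lemma~\ref{5} as the central tool. For the induction, set $L_0 = K_1\cdots K_{n-1}$, so that $K = L_0 K_n$. Since each $K_i\subseteq L_0$ for $i\leq n-1$, the tower law of field degrees gives $[K:K_i]=[K:L_0]\,[L_0:K_i]$, so combining the $n=2$ bound applied to $K=L_0K_n$ with the inductive hypothesis for $L_0$ yields
\[
|\disc(K)|\leq |\disc(L_0)|^{[K:L_0]}|\disc(K_n)|^{[K:K_n]}\leq \prod_{i=1}^n |\disc(K_i)|^{[K:K_i]}.
\]
Thus the real content is the case $L=K_1K_2$.

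For $L=K_1K_2$ I would combine the tower formula for differents, $\D_{L/\Q}=\D_{L/K_1}\cdot(\D_{K_1/\Q}\O_L)$, with the key divisibility $\D_{L/K_1}\mid \D_{K_2/\Q}\O_L$. Taking $N_{L/\Q}$ and using the standard identity $N_{L/\Q}(\Fa\O_L)=N_{K_i/\Q}(\Fa)^{[L:K_i]}$ for an ideal $\Fa$ of $\O_{K_i}$, I would obtain
\[
|\disc(L)|=N_{L/\Q}(\D_{L/K_1})\cdot|\disc(K_1)|^{[L:K_1]}\leq |\disc(K_2)|^{[L:K_2]}|\disc(K_1)|^{[L:K_1]},
\]
which is the $n=2$ bound.

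To establish the divisibility $\D_{L/K_1}\mid \D_{K_2/\Q}\O_L$, I would invoke Lemma~\ref{5} on both sides simultaneously. Let $\alpha\in\O_{K_2}$ be a primitive integral generator of $K_2/\Q$, with minimal polynomial $g\in\Z[x]$ over $\Q$. Since $L=K_1K_2=K_1(\alpha)$, and $g$, viewed as a monic polynomial in $\O_{K_1}[x]$, vanishes at $\alpha$, Lemma~\ref{5} applied to $L/K_1$ forces $g'(\alpha)\in\D_{L/K_1}$. As $\alpha$ varies over all such generators, these same elements $g'(\alpha)$ generate $\D_{K_2/\Q}$ by Lemma~\ref{5} applied to $K_2/\Q$, yielding $\D_{K_2/\Q}\O_L\subseteq\D_{L/K_1}$, which is the desired divisibility. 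The main subtlety—and the reason the paper's version of Lemma~\ref{5} is phrased to allow reducible polynomials $g$—is that $g\in\Z[x]$, while minimal for $\alpha$ over $\Q$, is typically not minimal for $\alpha$ over $K_1$, so the flexibility in Lemma~\ref{5} is precisely what makes this cross-application work.
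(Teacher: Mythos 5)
Your proposal is correct and follows essentially the same route as the paper: the paper runs the tower $\Q\subseteq K_1\subseteq K_1K_2\subseteq\cdots\subseteq K$ in one pass, using Lemma \ref{5} at each step to get exactly your key divisibility $\D_{(K_1\cdots K_i)/(K_1\cdots K_{i-1})}\mid \D_{K_i/\Q}\O_K$, then multiplies differents in towers and takes norms. Your induction on $n$ merely unrolls that tower into repeated applications of the two-field case, and your observation about why Lemma \ref{5} must allow reducible $g$ is precisely the point the paper makes in its proof of that lemma.
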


\begin{proof} (cf. \cite{T1955}) Consider the tower of fields $\Q \subseteq K_1 \subseteq K_1K_2 \subseteq \cdots \subseteq K=K_1K_2\cdots K_n$.
The different of $(K_1K_2\cdots K_{i})/( K_1 K_2 \cdots K_{i-1})$ divides the different $\mathfrak D_i$ of $K_i/\Q$, by Lemma \ref{5}.
Since differents multiply in towers, the different of $K/\Q$ divides $\mathfrak D_1 \mathfrak D_2\cdots \mathfrak D_n$.
Taking the norm from $K$ to $\Q$ yields the result.
\end{proof}

\begin{lemma} \label{6}
Let $f\in \Z[x]$ be a monic polynomial with no repeated roots and let $F$ be the splitting field of $f$. Let $d=[F:\Q]$. Then $\disc(F)^2\text{ divides }\disc(f)^{d}$.
\end{lemma}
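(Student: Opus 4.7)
The plan is to prove the claim by induction on $n = \deg f$, strengthening the statement to allow an arbitrary number field $K$ as the base: for any monic $f \in \O_K[x]$ with no repeated roots and splitting field $F/K$ of degree $d$, one has $\disc(F/K)^2 \mid \disc(f)^d$ as ideals of $\O_K$. The case $n \leq 1$ is trivial, since then $F = K$ and both sides are the unit ideal.

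For the inductive step, I would fix a root $\alpha$ of $f$, set $K' = K(\alpha)$, and write $f(x) = (x - \alpha) g(x)$ with $g \in \O_{K'}[x]$ of degree $n - 1$, so that $F$ is also the splitting field of $g$ over $K'$. The inductive hypothesis applied to $g$ over $K'$ gives $\disc(F/K')^2 \mid \disc(g)^{[F:K']}$ in $\O_{K'}$. The tower formula $\disc(F/K) = N_{K'/K}(\disc(F/K')) \cdot \disc(K'/K)^{[F:K']}$, when squared, decomposes the target into two factors that I would bound separately. The first, $N_{K'/K}(\disc(F/K'))^2 = N_{K'/K}(\disc(F/K')^2)$, is controlled by the inductive hypothesis. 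For the second, $\disc(K'/K)^{2[F:K']}$, Lemma \ref{5} applied to $K'/K$ with polynomial $f$ and element $\alpha$ yields $\D_{K'/K} \mid (f'(\alpha))$ in $\O_{K'}$, and taking $N_{K'/K}$ gives $\disc(K'/K) \mid (N_{K'/K}(f'(\alpha)))$ in $\O_K$.

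Two elementary identities close the argument: differentiating $f = (x - \alpha) g$ and evaluating at $x = \alpha$ gives $f'(\alpha) = g(\alpha)$, while separating the factor $\alpha$ in the product formula $\disc(f) = \prod_{i < j}(\alpha_i - \alpha_j)^2$ gives $\disc(f) = g(\alpha)^2 \cdot \disc(g)$. Combining the two factor bounds then yields
\[
\disc(F/K)^2 \mid N_{K'/K}(\disc(g) \cdot g(\alpha)^2)^{[F:K']} = N_{K'/K}(\disc(f))^{[F:K']} = \disc(f)^{[K':K] \cdot [F:K']} = \disc(f)^{[F:K]},
\]
which closes the induction; specializing to $K = \Q$ recovers the stated lemma.

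The main obstacle is recognizing the need to strengthen the statement to an arbitrary base field $K$ at the outset, since the inductive step naturally passes to the extended base $K' = K(\alpha)$. Once that relative formulation is in hand, the proof amounts to bookkeeping combining the tower formula for discriminants, Lemma \ref{5}, and the two discriminant identities above.
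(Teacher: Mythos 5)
Your proof is correct and is essentially the paper's argument in inductive clothing: both build the tower $\Q\subseteq \Q(\alpha)\subseteq\cdots\subseteq F$ by adjoining one root at a time, bound each relative different via Lemma \ref{5} applied to the residual factor of $f$ (your $f'(\alpha)=g(\alpha)$ is the paper's $\prod_{j}(\beta_{i+1}-\beta_j)$), and combine the steps by multiplicativity in towers. The only difference is organizational --- you take norms at each stage via the discriminant tower formula, while the paper multiplies the differents all the way up to $\D_{F/\Q}$ and takes a single norm at the end.
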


\begin{proof}
Let $f(x)=\prod_{i=1}^n (x-\beta_i)$. For $i>0$, let $K_i=\Q(\beta_1,\beta_2,\dots, \beta_i)$. Let $f_i(x)=\prod_{j=i+1}^n (x-\beta_j)$. Since $K_{i+1}=K_{i}(\beta_{i+1})$ and $f_i(\beta_{i+1})=0$, the different $\D_{K_{i+1}/K_i}$ divides  $$f'(\beta_{i+1})=\prod_{j=i+2}^n (\beta_{i+1}-\beta_j)$$ by Lemma \ref{5}. Since differents multiply in 
towers, we have that $\D_{F/\Q}$ divides the ideal generated by $$\prod_{i=0}^{n-2} \prod_{j=i+2}^n (\beta_{i+1}-\beta_j)=\prod_{i<j} (\beta_i-\beta_j)=\disc(f)^{1/2}.$$ Squaring and taking norms, we obtain the result.
\end{proof}

By Lemma \ref{6}, the discriminant of splitting field $K_f$ of $f(z)$  divides $(4A^3+27B^2)^3$. The discriminant of the splitting field $K_g$ of $g(z)$ divides $4\prod_{i=1}^m p_i$. 
A computation shows that the discriminant of $\xi_j(z)$ is 
$$
2^{12}(-4A^3-27B^2)f(j)^2.
$$
Therefore, the discriminant of the splitting field $K_j$ of $\xi_j(z)$ divides $2^{144}(-4A^3-27B^2)^{12}f(j)^{24}$.

\begin{lemma}\label{9}
Let $y^2=x^3+Ax+B$ define an elliptic curve over a field $L$ of characteristic not 2 and assume $E(L)$ contains $E[2]$. Let $j\in L$ and let $\tilde{L}$ be the splitting field
of $$
\xi_j(x) =  x^4-2Ax^2-8Bx+A^2- j(4x^3+4Ax+4B).$$
Then $[\tilde{L} : L]$ divides $4$.
\end{lemma}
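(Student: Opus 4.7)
The plan is to identify the roots of $\xi_j$ with $x$-coordinates of points $Q\in E(\bar L)$ doubling to the point $P=(j,y_P)\in E(\bar L)$, where $y_P^2=f(j)$. Since doubling sends $(x,y)$ to a point with $x$-coordinate $r(x)/s(x)=h(x)$, a value $x_0\in\bar L$ is a root of $\xi_j$ iff some $Q=(x_0,y_0)\in E(\bar L)$ satisfies $x(2Q)=j$. The four such $Q$ with $2Q=P$ form a single coset of $E[2]$, and since $-Q$ has the same $x$-coordinate as $Q$ but doubles to $-P$, the roots of $\xi_j$ (counted with multiplicity) are exactly the four $x$-coordinates of this coset.

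Next I would fix any root $x_0$, pick $y_0\in\bar L$ with $y_0^2=f(x_0)$, and set $Q_0=(x_0,y_0)$. For each of the three nontrivial $T=(e,0)\in E[2]$, which lies in $E(L)$ by the hypothesis $E[2]\subset E(L)$, the chord-tangent law gives
$$
x(Q_0+T)=\left(\frac{y_0}{x_0-e}\right)^2-x_0-e=\frac{f(x_0)}{(x_0-e)^2}-x_0-e\ \in\ L(x_0),
$$
since $e\in L$ and $f(x_0)\in L(x_0)$. Together with $x_0$ itself, these three elements account for all four roots of $\xi_j$, so $\tilde L\subseteq L(x_0)$ and hence $\tilde L=L(x_0)$.

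Because $x_0$ was arbitrary, every root of $\xi_j$ generates the same field $\tilde L$ over $L$. Equivalently, all irreducible factors of $\xi_j$ in $L[x]$ share a common degree $d=[\tilde L:L]$, and since these degrees must sum to $\deg\xi_j=4$, the only possibilities are $d\in\{1,2,4\}$, each dividing $4$. The only mildly delicate point is ruling out a (linear)(cubic) factorization, but this is automatic: a linear factor would give a root $x_0'\in L$ with $\tilde L=L(x_0')=L$, incompatible with the presence of a cubic factor. Degenerate cases where $\xi_j$ has repeated roots, which occur exactly when $P\in E[2]$ (equivalently $j\in\{e_1,e_2,e_3\}$), do not affect the argument, since the containment $\tilde L\subseteq L(x_0)$ holds independently of multiplicity.
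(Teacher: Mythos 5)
Your proof is correct, and it takes a genuinely different (and more direct) route than the paper's. The paper's argument is in two steps: it first adjoins $y'=\sqrt{f(j)}$, observes that the four preimages of $(j,y')$ under doubling form a coset of $E[2]$, so that the relevant Galois group over $L(y')$ embeds into $E[2]$ and $[\tilde L:L]$ divides $8$; it then computes the discriminant of $\xi_j$, notes that it is a square in $L$ because $f$ splits over $L$, places the Galois group of $\xi_j$ inside $A_4$ so that the degree also divides $12$, and combines the two divisibilities to conclude the degree divides $4$. You instead prove the stronger statement that $\tilde L=L(x_0)$ for \emph{any} root $x_0$, by exploiting the fact that the $x$-coordinate of $Q_0+(e,0)$ involves only $y_0^2=f(x_0)$ and the $L$-rational abscissa $e$, hence lies in $L(x_0)$ without adjoining $y_0$; the divisibility $[\tilde L:L]\mid 4$ then falls out of the factorization of the monic quartic $\xi_j$ into irreducible factors of equal degree. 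Your approach avoids both the discriminant computation and the $A_4$ detour, and yields extra information (every root generates the splitting field, so no auxiliary quadratic extension is needed). The only point worth making explicit is that a root $x_0$ of $\xi_j$ can never coincide with a $2$-torsion abscissa $e$, so the denominator $x_0-e$ is nonzero: if $x_0=e$ then $s(x_0)=4f(x_0)=0$, and then $\xi_j(x_0)=r(x_0)\ne 0$ since $r$ and $s$ have nonvanishing resultant $(4A^3+27B^2)^2$. This is implicit in your identification of the roots with $x$-coordinates of points $Q$ satisfying $2Q=\pm P$ (which forces $Q\notin E[2]$), but it deserves a sentence.
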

\begin{proof} Let $y'=\sqrt{j^3+Aj+B}$ and $L'=L(y')$. Then $(j, y')\in E(L')$. Let $(a, b)\in E(\overline{L})$ satisfy $2(a, b)= (j, y')$ and let $L'(a, b)$
be the field generated by $a$ and $b$.  Since $E[2]\subseteq E(L')$, all four solutions of $2(a, b)= (j, y')$  have coordinates in $L'(a, b)$, and 
$\text{Gal}(L'(a, b)/L')$ is isomorphic to a subgroup of $E[2]$. Since $L'(a, b)$ contains the splitting field of $\xi_j(x)$, we conclude that the splitting
field has degree over $L$ dividing 8.

Note that $-A^3-27B^2$ is a square in $L$, and therefore the discriminant of $\xi_j(z)$ is a square in $L$.
It follows that the Galois group of $\xi_j(x)$ is a subgroup of $A_4$, hence has degree dividing 12. Since the degree also divides 8, the degree divides 4.
\end{proof}

The splitting field $F$ of $T(z)$ is $K_fK_gK_0K_1\cdots K_N$. Therefore,
$$
[F : \Q] \le 6\cdot 2\cdot 4^{N+1}.$$
If $K_j= \Q$ for some $j$, then we can omit that field from our calculations. Therefore, when we apply Lemma \ref {4}  to the present situation,
we can bound the remaining exponents $[F : K_i]$ 
by $[F : \Q]/2$ and obtain
\begin{align*}
|\text{disc}(F)| &\le  \left(C_E \prod_{0\le j\le N}C_E' |f(j)|^{24}\right)^{6\cdot 4^{N+1}}\\
 &\le  \left((C_E'' N)^{72(N+1)}\right)^{6\cdot 4^{N+1}}\\
& = (C_E'' N)^{432(N+1)4^{N+1}},
\end{align*}
where $C_E$, $C_E'$,  and $C_E''$ are constants depending only on the elliptic curve $E$ and where
we have bounded $f(j)$ by a constant times $N^3$.

We now can estimate the smallest prime $p$ that splits completely in $F/\Q$. A theorem of Ahn and Kwon \cite{AK2019} states that 
$p< |\text{disc}(F)|^{12577}$. Therefore,
\begin{align*}
\log\log p & <  \log 12577 + \log\log |\text{disc}(F)|\\
&\le \log 12577+ \log\left(432(N+1)4^{N+1}\right) + \log\log C_E'' N\\
&= N\log 4 + o(N)\\
&<  N/0.72
\end{align*}
when $N$ is sufficiently large. But $r(E,p)> N$ for this $p$, so the proof of Theorem \ref{1} is complete.

Assuming the Generalized Riemann Hypothesis for the Dedekind zeta function of $F$,
 Lagarias and Odlyzko \cite{LO1977} show that there exists $p<C_0(\log(|\text{disc}(F)|))^2$ for some $C_0>0$.  Therefore,
\begin{align*}
\log p  & < \log C_0 + 2\log\log |\text{disc}(F)|\\
&\le 2\log(4) N + o(N)\\
&< N/0.36
\end{align*}
when $N$ is sufficiently large.  Therefore, $r(E,p) > N > 0.36 \log p$ for this $p$. This completes the proof of Theorem \ref{2}.
\qed \smallskip

\noindent \textit{Remark}. As the proof indicates, the constants $0.72$ and $0.36$ can be replaced by any $k_1 <1/\log 4$ and $k_2<1/\log 16$, respectively. The constant $12577$ in the bound of Ahn and Kwon is also not crucial; the existence of such a constant is enough for our purposes. A recent preprint of Kadiri and Wong \cite{KW2021}
improves the constant to $310$.

\section{Numerical Results}\label{numerical}

For each of the elliptic curves in this section, we computed $r(E, p)$ as $p$ ran through primes of good reduction less than $3\times 10^6$ . If a value was
larger than $r(E, q)$ for all $q<p$, we recorded $p$ and $r(E, p)$. The results are given in Tables 1 -- 7.
We omit the data for primes $p<100$ since they are too small to consider in the asymptotic behavior. 
The calculations were done in Sage \cite{sage}.

The third and fourth columns of each table compare $r(E, p)$ to $\log p \log\log p$ and $\log p (\log\log p)^2$. It is well known that 
$\log\log p$ grows so slowly that it is often not easy to recognize what power is appropriate. In the present case,
the ratio of $\log\log(2\times 10^6)$ to $\log\log 200$
is $1.6$, and this is representative of the range of primes in our data. So  the numbers in the third  and fourth columns sometimes exhibit 
 a definite increase or decrease when the power of 
$\log\log p$ is modified. But other times, it is not readily apparent which power is appropriate. 
For each column, we computed the slope of the least-squares line through the data points and listed the result in the last line of the table. For example, for the fourth column of Table 1, we used the points 
$(1, 1.49), (2, 0.94), (3, 1.10), \dots, (14, 1.38)$. The least-squares line has slope $.018$.  In three of the tables, the absolute value of the slope is smaller in the third column
and in the other four tables the absolute value of the slope is smaller in the second column.
We do not have an explanation for the potential variation
of exponents. It seems reasonable to guess that  an upper bound of the form $r(E, p) \le C \log p (\log\log p)^{\delta}$ is possible.
In other words, the estimate of Theorem 2 is probably sharp, except for powers of $\log\log p$ and smaller contributions. As mentioned in the Introduction, Montgomery \cite{M} showed under GRH that the smallest quadratic non-residue is $\Omega(\log p \log\log p)$. The
numerical results for elliptic curves indicate that a similar result is possible for elliptic curves.(Of course the estimate of Theorem 1 is probably not close to sharp, unless GRH is false.)

The first four curves have complex multiplication by  $\mathbb Z[i]$, ,$\mathbb Z[i]$, $\mathbb Z[(1+\sqrt{-3})/2]$, and $\mathbb Z[(1+\sqrt{-7})/2]$ , respectively.
All of the $p$ that occur are supersingular primes for their respective  curves with the exception of $p=13007$ in Table 4.  For these supersingular primes, 
the group $E(\F_p)$ has order $p+1$ and is either cyclic or cyclic times a group of order 2. This can be seen as follows.
The Frobenius map is given by $\sqrt{-p}$ in the endomorphism ring. If the full $n$-torsion is contained in $E(\F_p)$, then
the Frobenius endomorphism must be congruent to 1 mod $n$. But $(\sqrt{-p}-1)/n$ is not integral when $n>2$. It follows that $E(\F_p)$ is either $\Z/(p+1)/Z$, or $\Z/\frac{p+1}{2}\Z\times \Z/2\Z$. The latter is always the case for the curve $y^2=x^3-x$. However,  
for the other three curves, only one point of order 2 is in $E(\F_p)$, so the group is cyclic. 
A cyclic group sometimes has fewer elements of maximal order than a non-cyclic abelian group of the same order. However, it is not 
clear why almost every example is a supersingular prime.

The curves in the last three tables do not have complex multiplication (the last curve is the Weierstrass form for $X_0(11)$).
The values of $r(E, p)$ are somewhat smaller than those for the curves with complex multiplication. Perhaps this reflects the fact that supersingular primes are less frequent, but a good explanation is yet to be found. 

The curves in Tables 2 and 7 have non-cyclic 2-torsion over $\mathbb Q$, hence mod each of the primes $p$ considered. This phenomenon 
seems to cause larger values of $r(E,p)$.

An interesting situation occurs in Tables 1 and 2, where the prime 537599 is in both tables. Note that in Table 1, the group
$E(\F_p)$ is cyclic of order $537600 = 2^{10}\cdot 3\cdot 5^2\cdot 7$, which is a very smooth number. This lowers the probability
that a randomly chosen element is a generator. In fact, $\phi(537600)/537600= 8/35$. The group for the curve in Table 2 is the product of a 
cyclic group of order $2^{9}\cdot 3\cdot 5^2\cdot 7$ times a group of order 2. The probability is again 8/35 that a randomly chosen element of the group has maximal order.  These probabilities are low, but it still seems to be a lucky coincidence that this $p$ occurs in both tables. The smoothness of the group order is probably not the deciding factor.
There are several smooth numbers close to each of the primes in our table. For example, the Mersenne prime $2^{19}-1= 524287$ yields $r(E,p)=3$ for $y^2=x^3+x$ and $r(E,p)= 4$ for $y^2=x^3-x$, and both curves have $2^{19}$ points. The more relevant property might be the existence of several small prime factors of $n=p+1$ (in the supersingular case for the present curves) since this makes $\phi(n)/n$ small. But this does not guarantee that $r(E,p)$ is large. An example is $p=570569$, where $p+1=2\cdot 3\cdot 5\cdot 7\cdot 11\cdot 13\cdot 19$. But $r(E,p)=6$ for both $y^2=x^3+x$ and $y^2=x^3-x$. It would be interesting to find a good explanation, if one exists, for the double occurrence of 537599.

\begin{table}[h!]
\begin{center}
\caption{$y^2=x^3+x$}
\label{table2}
\begin{tabular}{|c|c|c|c|}\hline
\boldmath{$p$} & \boldmath{$r(E, p)$}  &\boldmath{$r(E, p)/ \log p \log\log p $} & \boldmath{$r(E, p)/ \log p(\log\log p)^2 $}\\
\hline
179 & 21 &2.46& 1.49\\
\hline
719 & 22 & 1.78 & 0.94\\
\hline
743 & 26 & 2.08 & 1.10\\
\hline
1559 & 31 & 2.11 & 1.06\\
\hline
1931 & 47 & 3.07  &1.52\\
\hline
5039 & 51 & 2.79 &1.30\\
\hline
9239 & 58 & 2.87 &1.30\\
\hline
23399 & 62 & 2.67 &1.16\\
\hline
23663 & 79 & 3.40 &1.47\\
\hline
52919 & 109 & 4.20 &1.76\\
\hline
407879 & 114 & 3.45 &1.35\\
\hline
537599 & 116 & 3.41 & 1.32\\
\hline
2599559 & 139 & 3.49 & 1.30\\
\hline
2611391 & 148 & 3.72 & 1.38\\
\hline
slope & & $.140$ & $.018$\\
\hline
\end{tabular}
\end{center}
\end{table}

\FloatBarrier

\begin{table}[h!]
\begin{center}
\caption{$y^2=x^3-x$}
\label{table3}
\begin{tabular}{|c|c|c|c|}\hline
\boldmath{$p$} & \boldmath{$r(E, p)$}  &\boldmath{$r(E, p)/ \log p \log\log p $}& \boldmath{$r(E, p)/ \log p(\log\log p)^2 $}\\
\hline
191 & 20 &2.30& 1.38\\
\hline
311 & 22 &2.19 & 1.26\\
\hline
431 & 27 &2.47 & 1.37\\
\hline
479 & 37 &3.29 & 1.81\\
\hline
1319 & 38 &2.68 & 1.36\\
\hline
2351 & 40 &2.51 & 1.23\\
\hline
3119 & 60 &3.58 & 1.72\\
\hline
5711 & 61 &3.27 & 1.51\\
\hline
7559 & 67 &3.43 & 1.57\\
\hline
13679 & 84 &3.91 & 1.74\\
\hline
26759 & 86 &3.63 & 1.56\\
\hline
49871 & 102 & 3.96 & 1.66\\
\hline
115079 & 123 & 4.30 & 1.75\\
\hline
327599 & 130 &4.03& 1.58\\
\hline
340031 & 133 &4.10 & 1.61\\
\hline
504479 & 157 &4.64 & 1.80\\
\hline
537599 & 192 &5.64 & 2.19\\
\hline
slope & & $.169$ & $.033$\\
\hline
\end{tabular}
\end{center}
\end{table}

\begin{table}[h!]
\caption{$y^2=x^3+1$}
\label{table1}
\begin{tabular}{|c|c|c|c|}\hline
\boldmath{$p$} & \boldmath{$r(E, p)$} & \boldmath{$r(E, p)/ \log p\log\log p$} & \boldmath{$r(E, p)/ \log p(\log\log p)^2$}\\
\hline
101 & 28 & 3.97 & 2.59\\
\hline
479 & 40 & 3.56 & 1.96\\
\hline
569& 45 & 3.84 & 2.08 \\
\hline
1319 & 46 & 3.25 & 1.65 \\
\hline
2999 & 67 & 4.02 & 1.93 \\
\hline
38639 & 105 & 4.22 & 1.79 \\
\hline
149519 & 112 & 3.79 & 1.53 \\
\hline
403079& 114 & 3.45 & 1.35\\
\hline
1385039 & 116 & 3.10 & 1.17\\
\hline
2837519 & 144 & 3.59& 1.33\\
\hline
slope & & $-.041$ & $-.127$\\
\hline
\end{tabular}
\end{table}

\begin{table}[h!]
\begin{center}
\caption{$y^2=x^3-385875x-113447250$}
\label{table4}
\begin{tabular}{|c|c|c|c|}\hline
\boldmath{$p$} & \boldmath{$r(E, p)$} & \boldmath{$r(E, p)/ \log p \log\log p $} & \boldmath{$r(E, p)/ \log p(\log\log p)^2 $}\\
\hline
167 & 13 & 1.56 & 0.95\\
\hline
241 & 25 & 2.68 & 1.57\\
\hline
593 & 27 & 2.28 &  1.23\\
\hline
2063 & 31 & 2.00 & 0.98\\
\hline
3527 & 38 & 2.22 & 1.05 \\
\hline
9203 & 40 & 1.98 & 0.90\\
\hline
13007 & 42 & 1.97 & 0.88\\
\hline
13859 & 59 & 2.74 & 1.22\\
\hline
174569 & 70 & 2.33 & 0.93 \\
\hline
2798459 & 78 & 1.95 & 0.72 \\
\hline
slope & & $.018$ & $-.043$\\
\hline
\end{tabular}
\end{center}
\end{table}

\begin{table}[h!]
\begin{center}
\caption{$y^2=x^3+x+1$}
\label{table5}
\begin{tabular}{|c|c|c|c|}\hline
\boldmath{$p$} & \boldmath{$r(E, p)$} &\boldmath{$r(E, p)/ \log p \log\log p $} &  \boldmath{$r(E, p)/ \log p(\log\log p)^2 $}\\
\hline
197 & 8 & 0.91 & 0.55\\
\hline
283 & 13 &1.33 & 0.77\\
\hline
613 & 17 & 1.42 & 0.77\\
\hline
647 & 18 & 1.49 & 0.80\\
\hline
811 & 19 &1.49 & 0.78\\
\hline
1187 & 29 &2.09 & 1.07\\
\hline
21023 & 31 &1.36 & 0.59\\
\hline
29669 & 32 & 1.33 & 0.57\\
\hline
60317 & 42 & 1.59 &0.66\\
\hline
76421 & 48 &1.76  &0.73\\
\hline
114269 & 51 & 1.78 & 0.73\\
\hline
250993 & 60 & 1.91& 0.76\\
\hline
2800267 & 64 &1.60 & 0.59\\
\hline
slope & & $.048$ & $-.005$\\
\hline
\end{tabular}
\end{center}
\end{table}

\begin{table}[h!]
\begin{center}
\caption{$y^2=x^3-13392x - 1080432$}
\label{table6}
\begin{tabular}{|c|c|c|c|}\hline
\boldmath{$p$} & \boldmath{$r(E, p)$} & \boldmath{$r(E, p)/ \log p \log\log p $}& \boldmath{$r(E, p)/ \log p (\log\log p)^2 $}\\
\hline
107 & 18 & 2.50 &1.62 \\
\hline
227 & 25 & 2.73& 1.61  \\
\hline
461 & 28 & 2.52& 1.39 \\
\hline
997 & 30 & 2.15& 1.16 \\
\hline
3613 & 37 & 2.15& 1.02 \\
\hline
20173 & 49 & 2.16& 0.94 \\
\hline
77813 & 51 & 1.87& 0.77 \\
\hline
93419 & 64 & 2.29& 0.94 \\
\hline
508213 & 81 & 2.39& 0.93 \\
\hline
2311823& 96 & 2.44 & 0.91\\
\hline
slope & & $-.030$ & $-.089$\\
\hline
\end{tabular}
\end{center}
\end{table}

\begin{table}[h]
\begin{center}
\caption{$y^2=x^3-7x +6$}
\label{table7}
\begin{tabular}{|c|c|c|c|}\hline
\boldmath{$p$} & \boldmath{$r(E, p)$} & \boldmath{$r(E, p)/ \log p \log\log p $}& \boldmath{$r(E, p)/ \log p (\log\log p)^2 $}\\
\hline
101 & 28 & 3.97 & 2.59\\
\hline
1297 & 30& 2.12 & 1.08\\
\hline
1511 & 34 & 2.33 & 1.17\\
\hline
1873 & 56& 3.68 & 1.82 \\
\hline
12119 & 68 & 3.23 & 1.44\\
\hline
12239 & 71 & 3.36 & 1.50\\
\hline
41039 & 74 & 2.95 & 1.25\\
\hline
47351 & 75 & 2.93 & 1.23\\
\hline
64679 & 91 & 3.42 & 1.42\\
\hline
178559 & 110 & 3.65 & 1.46\\
\hline
393121 & 142 & 4.31 & 1.69\\
\hline
1161599 & 169 & 4.59 & 1.74\\
\hline
2671679 & 194 & 4.87 & 1.81\\
\hline
slope & & $.140$ & $-.004$ \\
\hline
\end{tabular}
\end{center}
\end{table}

\FloatBarrier

\end{document}